\UseAllTwocells \xyoption{frame} \CompileMatrices
\newtheorem{prop}{Proposition}[section]
\newtheorem{lem}[prop]{Lemma}
\newtheorem{thm}[prop]{Theorem}
\numberwithin{equation}{section}
\newcommand{\cO}{\mathcal{O}}
\newcommand{\sL}{\mathcal{L}}
\newcommand{\bbT}{\mathbb{T}}
\date{\today}
\begin{document}

\title{On the derived category of a toric stack bundle}

\author{Qian Chao}
\address{Department of Mathematics\\ Ohio State University\\ 100 Math Tower, 231 West 18th Ave. \\ Columbus,  OH 43210\\ USA}
\email{chao.191@buckeyemail.osu.edu}

\author{Jiun-Cheng Chen}
\address{Department of Mathematics\\ Third General Building\\ National Tsing Hua University\\ No. 101 Sec 2 Kuang Fu Road\\ Hsinchu, 30043\\ Taiwan}
\email{jcchen@math.nthu.edu.tw}

\author[Hsian-Hua Tseng]{Hsian-Hua Tseng}
\address{Department of Mathematics\\ Ohio State University\\ 100 Math Tower, 231 West 18th Ave. \\ Columbus,  OH 43210\\ USA}
\email{hhtseng@math.ohio-state.edu}

\begin{abstract}
We establish some properties of the derived category of torus-equivariant coherent sheaves on a split toric stack bundle. Our main result is a semi-orthogonal decomposition of such a category. 
\end{abstract}

\maketitle

\section{Introduction}

We work over $\mathbb{C}$. Let $B$ be a smooth projective variety. We fix a choice of an ample line bundle $L$ over $B$.

The purpose of this note is to establish some properties of derived category of equivariant coherent sheaves on a split toric stack bundle over $B$. Our main results concern three aspects: 
\begin{enumerate}
    \item spanning classes, see Proposition \ref{spanningclass};
    \item semi-orthogonal decomposition, see Theorem \ref{thm:semi-orth_decomp};
    \item Fourier-Mukai equivalence, see Proposition \ref{prop:FM}.
\end{enumerate}

\subsection{Constructions}
Let $\mathfrak{X}$ be a smooth projective toric stack of positive dimension\footnote{If $\mathfrak{X}$ is $0$-dimensional, then $\mathfrak{X}\simeq BA$ is the classifying stack of a finite abelian group $A$. In this case, toric stack bundles associated to $\mathfrak{X}$ defined in (\ref{eqn:toric_stack_bundle}) below are $A$-gerbes over $B$. Categories of sheaves on these gerbes are well-understood, see e.g. \cite{tt}. }. By construction, $\mathfrak{X}$ is presented as a quotient stack 
\begin{equation}
\mathfrak{X}:=[U_\Sigma/G],    
\end{equation}
where $U_\Sigma\subset \mathbb{C}^n$ is the complement of the union of certain coordinate subspaces, and $G$ acts on $\mathbb{C}^n$ via a homomorhpism $$\alpha:G\to (\mathbb{C}^*)^n.$$ The constructions of $U_\Sigma$ and $\alpha$ are dictated by a combinatorial object called (extended) stacky fan. We refer to \cite{bcs} and \cite{j} for details of the construction of toric stacks. We assume that $\mathfrak{X}$ has trivial generic stabilizer. Hence $\alpha$ is injective.

Put $$\bold{T}:=\text{coker}(\alpha).$$ There is a natural $\bold{T}$-action on $\mathfrak{X}$. Since $\mathfrak{X}$ is projective, $\mathfrak{X}$ contains a $\bold{T}$-fixed point. Also, line bundles on $\mathfrak{X}=[U_\Sigma/G]$ correspond to $G$-equivariant line bundles on $U_\Sigma$, and $\bold{T}$-equivariant line bundles on $\mathfrak{X}$ correspond to $(\mathbb{C}^*)^n$-equivariant line bundles on $U_\Sigma$.

We denote by $D^b_{\bold{T}}(\mathfrak{X})$ the derived category of $\bold{T}$-equivariant coherent sheaves on $\mathfrak{X}$. Note that a $\bold{T}$-equivariant sheaf on $\mathfrak{X}=[U_\Sigma/G]$ is the same as a $(\mathbb{C}^*)^n$-equivariant sheaf on $U_\Sigma$. Hence,
\begin{equation*}
D^b_{\bold{T}}(\mathfrak{X})=D^b_{\bold{T}}([U_\Sigma/G])\simeq D^b_{(\mathbb{C}^*)^n}(U_\Sigma).    
\end{equation*}

We now come to the construction of (split) toric stack bundles. Given a principal $(\mathbb{C}^*)^n$-bundle, $$P\to B,$$ we obtain a toric stack bundle over $B$ with fiber $\mathfrak{X}$,
\begin{equation}\label{eqn:toric_stack_bundle}
\phi: \mathfrak{E}:=[(P\times_{(\mathbb{C}^*)^n}U_\Sigma)/G]\to B,    
\end{equation}
where $G$ acts trivially on $P$ and via $\alpha$ on $U_\Sigma$. More details can be found in \cite{j} and \cite{jt}.

For $b_0\in B$, we denote the fiber of $\mathfrak{E}\to B$ over $b_0$ by $\mathfrak{X}_{b_0}:=\phi^{-1}(b_0)\simeq \mathfrak{X}$.  

\subsection{Organization}
The rest of this note is organized as follows. We begin with some basic materials. In Section \ref{sec:exc_obj} we recall the construction of equivariant exceptional collection on toric stacks in \cite{bo}, see Equation (\ref{eqn:excp_coll}). In Section \ref{sec:spanning} we construct a spanning class for the equivariant derived categories of split toric stack bundles, see Proposition \ref{spanningclass}. 

In Section \ref{sec:semi-ortho} we present our main result. Namely we construct a semi-orthogonal decomposition for the equivariant derived category of a split toric stack bundle, see Theorem \ref{thm:semi-orth_decomp}. This is an extension of the well-known semi-orthogonal decomposition of the derived category of coherent sheaves on a projective space, due to Bondal-Orlov. In Section \ref{sec:FM}, we consider two split toric stack bundles over the same base whose toric fibers are related via a crepant wall-crossing, as considered in  \cite{cij}. In this situation, we prove an equivalence of derived categories between these  toric stack bundles, see Proposition \ref{prop:FM}. 

\subsection{Acknowledgement}
We thank the referee for comments and suggestions. H.-H. T. is supported in part by Simons foundation collaboration grant.

\section{Exceptional objects}\label{sec:exc_obj}
We recall some basic definitions from \cite{bo}. Put $\bbT:=(\mathbb{C}^*)^n$. Consider the stack quotient $[\mathbb{C}^n/\bbT]$. The abelian category of coherent sheaves on this stack is equivalent to $Coh^{\bbT}(\mathbb{C}^n)$, the abelian category of $\bbT$-equivalent coherent sheaves. 
Denote by $$S_n:=\mathbb{C}[z_1,z_2, \cdots,z_n]$$ the polynomial ring of $n$ variables and $[n]=:\{1,2, \cdots , n\}$.
The abelian category $Coh^{\bbT}(\mathbb{C}^n)$ of equivariant sheaves is equivalent to the abelian category $\text{Gr}^{\mathbb{Z}^n}$-$S_n$ of finitely generated $\mathbb{Z}^n$-graded modules over
the polynomial coordinate ring $S_n$ with the 
natural $\mathbb{Z}^n$-grading.
We consider the group $\mathbb{Z}^n$ as  maps from $[n]$ to $\mathbb{Z}$. For every monomial $\prod _{i=1}^{i=n} z_i^{a_i}$, we associate it with an element $\overline{a} : [n] \to  \mathbb{Z}$ such that $\overline{a}(i) = a_i$. 
For quasi-coherent sheaves, we have similar results.
The abelian categories $Qcoh([\mathbb{C}^n/\bbT])$ and $Qcoh^{\bbT}(\mathbb{C}^n)$ of quasi-coherent sheaves are also equivalent to each other and to the abelian category $\text{Gr}^{\mathbb{Z}^n}$-$S_n$ of all $\mathbb{Z}^n$-graded modules. 

For every element $\overline{p} \in  \mathbb{Z}^n$, we define an equivariant line bundle
$$\mathcal{O}(- \overline{p})$$
on the affine space $\mathbb{C}^n$ 
as the coherent sheaf corresponding to the module 
$S_n(- \overline{p})$ which is
the free module $S_n$ with the grading shifted in such a way that the generator of the
module has degree $\overline{p}$. More generally, the twist functor $(\overline{p})$ on the category $\text{Gr}^{\mathbb{Z}^n}$-$S_n$ is defined as follows. It takes a graded module
$M = \bigoplus_{\overline{q} \in \mathbb{Z}^n} M_{\overline{q}}$  
 to the module  $M (\overline{p})$
such that  $M (\overline{p})_{\overline{q}} = M_{\overline{p}+\overline{q}}$ 
and takes a morphism $f : M \to N$ to the same morphism
viewed as a morphism $f (\overline{p}) : M (\overline{p}) \to N (\overline{p})$ between the twisted modules.

Let $I \subset [n]$ be a subset of $[n]$. Let $\overline{p}: [n] \to \mathbb{Z}$ be a function. Denote by 
$$\mathcal{O}_{I, \overline{p}}$$ the $\bbT$-equivariant sheaf on $\mathbb{C}^n$ associated with the $\mathbb{Z}^n$-graded module $$M_{I,\overline{p}}:= \mathbb{C}[z_1,z_2, \cdots, z_n]/\langle z_i, i \in I\rangle (-\overline{p}).$$ 

For each $i \in [n]$, we define $\overline{\epsilon}_{i}(j)= \delta_{ij}$.
For any subset $S \subset [n]$, the  characteristic function is defined as  $\overline{\chi}_S := \sum _{k \in S} \overline{\epsilon}_{k}$.
Following \cite{bo}, we define a lexicographic order as follows:  $(\overline{p}, I) > (\overline{q},J)$ if $\overline{p} > \overline{q}$ or $\overline{p} = \overline{q}$ and $|I| >|J|$. 
We also consider a different system to label these (shifted) $\bbT$-equivariant sheaves. 
For each $(\overline{p}, I)$, we associate an $n$-tuple  $a=(a_1,a_2,\cdots, a_n) \in \mathbb{Z}^n$ as follows: 
set $a_i=\overline{p}(i)$ if $p(i)<0$ or $ i \notin I$ and $a_i= \overline{p}(i)+1$ if $p(i) \geq 0$ and $i \in I$. 
We define 
\begin{equation}\label{eqn:excp_coll}
E_{a}:= \mathcal{O}_{I, \overline{p}} [\bold{p}].
\end{equation}
 
After relabeling, the order is easy to describe:  for $a, b \in \mathbb{Z}^n$, $a >b$ iff $a>b$ under the usual lexicographic order of $\mathbb{Z}^n$. It is more convenient to use this relabeled index system to state the main theorem. However, it is easier to give a proof using the original system.  

The objects $\{E_a\}$ restricts to $\bold{T}$-equivariant objects on $\mathfrak{X}$ via $U_\Sigma\subset \mathbb{C}^n$. We use the same notations for these restrictions. Namely, let 
\begin{equation}\label{eqn:exc_collection}
\{E_a\,\,|\,\, a\in \mathbb{Z}^n\}    
\end{equation}
be the strong full exceptional collection on $D^b_{\bold{T}}(\mathfrak{X})$ obtained in \cite[Theorem 2.7]{bo} and recalled above. 

By construction, objects $E_a\in D^b_{\bold{T}}(\mathfrak{X})$ in (\ref{eqn:exc_collection}) are shifts of pushforwards of certain line bundles on toric strata $[U_a/G]=\mathfrak{X}_a\subset \mathfrak{X}$, i.e. $G$-equivariant line bundles $\mathsf{L}_a$ on $U_a$. Twisting these line bundles by $P\to B$ as in the construction of toric stack bundles gives line bundles $[(P\times_{(\mathbb{C}^*)^n}\mathsf{L}_a)/G]$ on $[(P\times_{(\mathbb{C}^*)^n}U_a)/G]$. Applying the pushforward with respect to the inclusion $$[(P\times_{(\mathbb{C}^*)^n}U_a)/G]\subset [(P\times_{(\mathbb{C}^*)^n}U_\Sigma)/G]$$ and shifts yield objects in $D^b_{\bold{T}}(\mathfrak{E})$ which we denote by $\tilde{E_a}.$ We observe that $\tilde{E_a}$ is flat over $B$.

\section{Spanning classes}\label{sec:spanning}

This Section contains some results on spanning classes. First, we have the following Lemma from \cite[Proposition 1.7]{bo}.

\begin{lem}
The collection of $\bold{T}$-equivariant line bundles
\begin{equation}
\{\sL\in Pic_{\bold{T}}(\mathfrak{X})\}    
\end{equation}
is a spanning class of $D^b_{\bold{T}}(\mathfrak{X})$.
\end{lem}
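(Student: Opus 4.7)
The plan is to deduce the statement from the strong full exceptional collection $\{E_a \mid a \in \mathbb{Z}^n\}$ on $D^b_{\bold{T}}(\mathfrak{X})$ recalled in (\ref{eqn:exc_collection}). By fullness, every object of $D^b_{\bold{T}}(\mathfrak{X})$ is obtained from finitely many $E_a$ via iterated cones and shifts, so $\{E_a\}$ is itself a spanning (in fact, generating) class. It therefore suffices to show that each $E_a$ lies in the thick triangulated subcategory generated by $\operatorname{Pic}_{\bold{T}}(\mathfrak{X})$: the vanishing of $\operatorname{Hom}(\sL, A[j])$ for every $\sL \in \operatorname{Pic}_{\bold{T}}(\mathfrak{X})$ and $j \in \mathbb{Z}$ then propagates through exact triangles to give $\operatorname{Hom}(E_a, A[j]) = 0$ for all $a, j$, and fullness of $\{E_a\}$ forces $A = 0$.

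To realize each $E_a = \mathcal{O}_{I,\overline{p}}[\mathbf{p}]$ as an iterated cone of $\bold{T}$-equivariant line bundles, the plan is to use the Koszul resolution attached to the regular sequence $\{z_i\}_{i \in I}$ in $S_n$. Explicitly, I would write a finite resolution whose degree-$k$ term is
\[
\bigoplus_{S \subseteq I,\, |S| = k} S_n(-\overline{p} - \overline{\chi}_S),
\]
each summand being a free $\mathbb{Z}^n$-graded module of rank one, i.e., a $\bbT$-equivariant line bundle on $\mathbb{C}^n$. Since each term is locally free, restriction along $U_\Sigma \hookrightarrow \mathbb{C}^n$ preserves exactness, and the resolution descends to the quotient stack $\mathfrak{X} = [U_\Sigma/G]$, exhibiting $\mathcal{O}_{I, \overline{p}}$, and hence $E_a$, as an iterated cone over $\bold{T}$-equivariant line bundles.

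For the companion vanishing condition (if $\operatorname{Hom}(A, \sL[j]) = 0$ for all $\sL \in \operatorname{Pic}_{\bold{T}}(\mathfrak{X})$ and $j$, then $A = 0$), I would invoke Serre duality on the smooth projective stack $\mathfrak{X}$: the Serre functor carries $\operatorname{Pic}_{\bold{T}}(\mathfrak{X})$ into itself (up to a twist by $\omega_\mathfrak{X}$ and a shift), so the two one-sided conditions are equivalent and follow from the argument above.

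The main technical point to check is that the Koszul complex carries a natural $\bold{T}$-equivariant structure compatible with the grading conventions of Section~\ref{sec:exc_obj}; this is automatic because the $z_i$ are weight vectors for the $\bbT$-action, so each summand $S_n(-\overline{p} - \overline{\chi}_S)$ matches the prescribed grading shift. Nothing more delicate is needed, and the argument reproduces \cite[Proposition~1.7]{bo} in the equivariant setting over the (possibly stacky) toric $\mathfrak{X}$.
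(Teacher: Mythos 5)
Your argument is correct, but it takes a genuinely different route from the paper's. The paper's proof is the more primitive one: it reduces to the case where $M$ is a nonzero coherent sheaf and invokes \cite[Theorem 4.6]{bh} to produce a surjection $\bigoplus_i \sL_i \to M$ from a finite direct sum of $\bold{T}$-equivariant line bundles, so that some nonzero section of $M\otimes \sL_i^{-1}$ contradicts the assumed vanishing of $\mathrm{Hom}(\sL_i, M)$. You instead route through the full exceptional collection $\{E_a\}$ of \cite[Theorem 2.7]{bo} combined with the $\bbT$-equivariant Koszul resolution of $M_{I,\overline{p}}$, whose terms $\bigoplus_{S\subseteq I,\,|S|=k} S_n(-\overline{p}-\overline{\chi}_S)$ are indeed rank-one free graded modules; since the open immersion $U_\Sigma\hookrightarrow\mathbb{C}^n$ is flat and equivariant sheaves descend to $[U_\Sigma/G]$, each $E_a$ does lie in the triangulated subcategory generated by $Pic_{\bold{T}}(\mathfrak{X})$, and the vanishing propagates through the cones exactly as you say. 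Two comparative remarks. First, the fullness of $\{E_a\}$ in \cite{bo} is itself ultimately derived from the generation of the category by equivariant line bundles (essentially \cite[Proposition 1.7]{bo}, which is the statement at hand), so your derivation inverts the logical order of \cite{bo}; within the present paper, where both inputs are quoted as black boxes, this is harmless, but it means your proof rests on a strictly deeper citation than the paper's. Second, your Serre-duality step is a genuine improvement in completeness: the paper only verifies the condition $\mathrm{Hom}(\sL, M[r])=0 \Rightarrow M=0$ and is silent on the companion condition, whereas your observation that the Serre functor $-\otimes\omega_{\mathfrak{X}}[\dim\mathfrak{X}]$ of $D^b_{\bold{T}}(\mathfrak{X})$ preserves $Pic_{\bold{T}}(\mathfrak{X})$ up to shift (valid equivariantly because $\bold{T}$ is linearly reductive, so invariants commute with duals) settles the other half of the definition of a spanning class.
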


\begin{proof}
By the definition of spanning class, we need to show that for an object $M$ of $D^b_{\bold{T}}(\mathfrak{X})$, if 
\begin{equation}
{\mathrm Hom}_{D^b_{\bold{T}}(\mathfrak{X})}(\sL, M[r])=0    
\end{equation}
for all $\sL\in Pic_{\bold{T}}(\mathfrak{X})$, $r\in\mathbb{Z}$, then $M=0$.     

We assume that $M$ is a nonzero coherent sheaf. By\footnote{The argument in the proof of \cite[Theorem 4.6]{bh} extends to the $\bold{T}$-equivariant setting.} \cite[Theorem 4.6]{bh}, there is a surjection
\begin{equation}
\bigoplus_i \sL_i\to M.    
\end{equation}
where $\{\mathcal{L}_i\}$ are $\bold{T}$-equivariant line bundles on $\mathfrak{X}$. So for some $i$, there is a nonzero section $\cO_{\mathfrak{X}}\to M\otimes \sL_i^{-1}$. But by assumption $0={\mathrm Hom}_{D^b_{\bold{T}}(\mathfrak{X})}(\sL_i, M[0])={\mathrm Hom}_{D^b_{\bold{T}}(\mathfrak{X})}(\cO_{\mathfrak{X}}, M\otimes \sL^{-1})$. Therefore $M=0$.   
\end{proof}

The following result gives a spanning class for split toric stack bundles. Recall that $L$ is an ample line bundle on $B$.
\begin{prop}\label{spanningclass}
The collection
\begin{equation}\label{eqn:spanning_X}
\{\phi^*L^{\otimes i}\otimes \tilde{\sL}\,\,|\,\, i\in\mathbb{Z}, \sL\in Pic_{\bold{T}}(\mathfrak{X})\}     
\end{equation}
is a spanning class of $D^b_{\bold{T}}(\mathfrak{E})$.
\end{prop}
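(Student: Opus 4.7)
The plan is to mimic the preceding lemma, using adjunction and proper base change to reduce the spanning question on $\mathfrak{E}$ to the corresponding statement on each fiber $\mathfrak{X}_b\simeq\mathfrak{X}$. As in that proof, I would assume $M\in D^b_{\bold{T}}(\mathfrak{E})$ is a nonzero coherent sheaf satisfying
\[
\mathrm{Hom}_{D^b_{\bold{T}}(\mathfrak{E})}(\phi^*L^{\otimes i}\otimes\tilde{\sL}, M[r])=0
\]
for all $i,r\in\mathbb{Z}$ and all $\sL\in Pic_{\bold{T}}(\mathfrak{X})$, and derive a contradiction.

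The first step is an adjunction. By $\phi^*\dashv R\phi_*$ combined with the projection formula,
\[
\mathrm{Hom}_{D^b_{\bold{T}}(\mathfrak{E})}(\phi^*L^{\otimes i}\otimes\tilde{\sL}, M[r]) \;=\; \mathrm{Hom}_{D^b_{\bold{T}}(B)}(L^{\otimes i}, R\phi_*(M\otimes\tilde{\sL}^{-1})[r]).
\]
Because $\bold{T}$ acts trivially on $B$, objects of $D^b_{\bold{T}}(B)$ decompose into $\bold{T}$-weight spaces, and as $\sL$ ranges over $Pic_{\bold{T}}(\mathfrak{X})$ the weight of $\tilde{\sL}$ sweeps out all characters of $\bold{T}$. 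Hence the hypothesized vanishing, taken over all $i$, $r$, and $\sL$, is equivalent to every weight component of $R\phi_*(M\otimes\tilde{\sL}_0^{-1})$ (for any fixed $\sL_0$) having vanishing hypercohomology against all twists $L^{\otimes i}$. Since $L$ is ample on $B$, a standard Serre-vanishing argument — if $K\in D^b(B)$ is nonzero, then taking the top non-vanishing cohomology sheaf $\mathcal{H}^k K$ and twisting by $L^{\otimes N}$ for $N\gg 0$ gives $H^k(B,K\otimes L^{\otimes N})=H^0(B,\mathcal{H}^kK\otimes L^{\otimes N})\neq 0$ — forces $R\phi_*(M\otimes\tilde{\sL}^{-1})=0$ for every $\sL\in Pic_{\bold{T}}(\mathfrak{X})$.

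The second step passes to each fiber via proper base change. For $b\in B$, with $\iota\colon\{b\}\hookrightarrow B$ and $\iota'\colon\mathfrak{X}_b\hookrightarrow\mathfrak{E}$, the properness of $\phi$ gives
\[
0 \;=\; L\iota^* R\phi_*(M\otimes\tilde{\sL}^{-1}) \;\simeq\; R\Gamma(\mathfrak{X}_b,\, L\iota'^{*}M\otimes\sL^{-1}),
\]
using $\tilde{\sL}|_{\mathfrak{X}_b}\simeq\sL$. Equivalently, $\mathrm{Hom}_{D^b_{\bold{T}}(\mathfrak{X}_b)}(\sL, L\iota'^{*}M[r])=0$ for every $\sL$ and $r$, so the preceding lemma applied to $\mathfrak{X}_b\simeq\mathfrak{X}$ yields $L\iota'^{*}M=0$. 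Since $M$ is a coherent sheaf, the $0$-th cohomology of $L\iota'^{*}M$ is the ordinary restriction $M|_{\mathfrak{X}_b}$, which is therefore zero for every $b\in B$; hence $M=0$, the desired contradiction.

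The main obstacle I anticipate is the careful tracking of $\bold{T}$-equivariance throughout, in particular verifying that varying $\sL\in Pic_{\bold{T}}(\mathfrak{X})$ really does sweep out all $\bold{T}$-weights on $B$ in the adjunction, and confirming the form of proper base change for the stack morphism $\phi$; both should hold routinely in this toric-stack-bundle setting, but merit explicit checking.
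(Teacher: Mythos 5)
Your proof is correct, and it uses the same two ingredients as the paper --- the fiberwise spanning class coming from generation of $D^b_{\bold{T}}(\mathfrak{X})$ by equivariant line bundles, and the ampleness of $L$ on $B$ --- but it deploys them in the opposite order, and this ordering is actually the logically tighter one. The paper first asserts that the hypothesis forces $M|_{\mathfrak{X}_{b_0}}=0$ for every $b_0$ (citing only the $i=0$ vanishing $\mathrm{Hom}(\cO_{\mathfrak{E}},\tilde{\sL}^{-1}\otimes M[r])=0$ and generation of the fiber category), and only afterwards brings in the ample twists $L^{\otimes k}$ via a computation of $\phi_*(M\otimes\phi^*L^{\otimes k})|_{b_0}$. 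As written, that first step is a jump: vanishing of global Hom's over all of $\mathfrak{E}$ does not by itself localize to a single fiber. Your argument supplies exactly the missing mechanism: adjunction plus the projection formula convert the hypothesis into $\mathrm{Hom}_{D^b(B)}(L^{\otimes i}, R\phi_*(M\otimes\tilde{\sL}^{-1})[r])=0$ for all $i,r$, Serre vanishing for the ample $L$ then kills $R\phi_*(M\otimes\tilde{\sL}^{-1})$ outright, and flat (hence Tor-independent) base change along $\{b\}\hookrightarrow B$ transports this to $R\Gamma(\mathfrak{X}_b, L\iota'^*M\otimes\sL^{-1})=0$, at which point the fiberwise lemma finishes. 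The two points you flag --- that twisting the equivariant structure of $\sL$ by characters of $\bold{T}$ sweeps out all weight components of the pushforward to the $\bold{T}$-trivial base, and that base change is valid because $\phi$ is a flat proper fiber bundle --- are precisely the checks needed, and both go through. In short, your proposal is a more careful rendering of the paper's intended proof rather than a different one; the only stylistic difference is that the paper moves everything onto $\cO_{\mathfrak{E}}$ and reasons with $\phi_*(M)$ directly, while you work with $R\phi_*$ on $D^b(B)$, which makes the reduction to fibers transparent.
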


\begin{proof}
By the definition of spanning class, we need to show that for an object $M$ of $D^b_{\bold{T}}(\mathfrak{E})$, if 
\begin{equation}\label{eqn:spanning_E}
{\mathrm Hom}_{D^b_{\bold{T}}(\mathfrak{E})}(\phi^*L^{\otimes i}\otimes \tilde{\sL}, M[r])=0    
\end{equation}
for all $i\in \mathbb{Z}$, $\sL\in Pic_{\bold{T}}(\mathfrak{X})$, $r\in\mathbb{Z}$, then $M=0$.     

We may assume that $M$ is a coherent sheaf. We have 
\begin{equation*}
{\mathrm Hom}_{D^b_{\bold{T}}(\mathfrak{E})}(\phi^*L^{\otimes i}\otimes \tilde{\sL}, M[r])={\mathrm Hom}_{D^b_{\bold{T}}(\mathfrak{E})}(\cO_{\mathfrak{E}}, \phi^*L^{\otimes -i}\otimes \tilde{\sL}^{-1}\otimes M[r]).    
\end{equation*}
By assumption, we have 
\begin{equation*}
{\mathrm Hom}_{D^b_{\bold{T}}(\mathfrak{E})}(\cO_{\mathfrak{E}}, \tilde{\sL}^{-1}\otimes M[r])=0        
\end{equation*}
for all $\sL\in Pic_{\bold{T}}(\mathfrak{X})$. Since $D^b_{\bold{T}}(\mathfrak{X})$ is generated by $\sL\in Pic_{\bold{T}}(\mathfrak{X})$ \cite[Theorem 4.6]{bh}, it follows that $M|_{\mathfrak{X}_{b_0}}=0$ for any $b_0\in B$. If in addition $\phi_*(M)\neq 0$, then together this implies $M=0$.

Suppose $\phi_*(M)\neq 0$. Let $k\in \mathbb{Z}$ be sufficiently large so that $\phi_*(M)\otimes L^{\otimes k}\neq 0$. We calculate
\begin{equation}
\phi_*(M\otimes \phi^*L^{\otimes k})|_{b_0}=(\phi_*(M)\otimes L^{\otimes k})|_{b_0}=H^0(X_{b_0}, M|_{\mathfrak{X}_{b_0}})=0,     
\end{equation}
which is a contradiction unless $M=0$.
\end{proof}

\section{Semi-orthogonal decomposition}\label{sec:semi-ortho}

\begin{thm}\label{thm:semi-orth_decomp}
The following is a semi-orthogonal decomposition of $D^b_{\bold{T}}(\mathfrak{E})$:
\begin{equation}\label{eqn:SOD_E}
(\phi^*D^b(B)\otimes \tilde{E}_a)_{a\in \mathbb{Z}^m}.    
\end{equation}
\end{thm}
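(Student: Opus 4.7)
The plan is to verify the three standard ingredients of a semi-orthogonal decomposition: (i) each functor $\Phi_a(-):=\phi^*(-)\otimes \tilde{E}_a : D^b(B)\to D^b_{\bold{T}}(\mathfrak{E})$ is fully faithful, so that its image is equivalent to $D^b(B)$; (ii) the semi-orthogonality $\mathrm{Hom}_{D^b_{\bold{T}}(\mathfrak{E})}(\Phi_a(F),\Phi_b(G)[r])=0$ for $a>b$, all $F,G\in D^b(B)$, and $r\in\mathbb{Z}$; and (iii) the union of the images generates $D^b_{\bold{T}}(\mathfrak{E})$.

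The heart of (i) and (ii) is a single computation. By adjunction and the projection formula,
\begin{equation*}
\mathrm{RHom}_{D^b_{\bold{T}}(\mathfrak{E})}\bigl(\phi^*F\otimes \tilde{E}_a,\,\phi^*G\otimes \tilde{E}_b\bigr)\;\cong\;\mathrm{RHom}_{D^b(B)}\bigl(F,\,G\otimes R\phi_* R\mathcal{H}om(\tilde{E}_a,\tilde{E}_b)\bigr).
\end{equation*}
Since each $\tilde{E}_a$ is flat over $B$, and the toric stack bundle $\mathfrak{E}\to B$ is, locally on $B$, a product $\mathfrak{X}\times B$ on which $\tilde{E}_a$ restricts to $E_a\boxtimes\cO_B$, flat base change along the fiber inclusion gives
\begin{equation*}
\bigl(R\phi_* R\mathcal{H}om(\tilde{E}_a,\tilde{E}_b)\bigr)\big|_{b_0}\;\cong\;\mathrm{RHom}_{D^b_{\bold{T}}(\mathfrak{X}_{b_0})}(E_a,E_b)
\end{equation*}
for every $b_0\in B$. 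Because $\{E_a\}$ is a \emph{strong} full exceptional collection, the right-hand side vanishes for $a>b$ and equals $\mathbb{C}$ in degree $0$ for $a=b$. Therefore $R\phi_* R\mathcal{H}om(\tilde{E}_a,\tilde{E}_b)=0$ when $a>b$, yielding (ii); while $R\phi_* R\mathcal{H}om(\tilde{E}_a,\tilde{E}_a)\cong\cO_B$, giving $\mathrm{RHom}(\Phi_a(F),\Phi_a(G))\cong\mathrm{RHom}_{D^b(B)}(F,G)$ and hence (i). (One uses along the way that the fibers of $\phi$ are smooth projective toric stacks, so $R\phi_*\cO_{\mathfrak{E}}\cong\cO_B$.)

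For (iii) I will use the spanning class of Proposition \ref{spanningclass}. Let $\mathcal{D}$ denote the full triangulated subcategory of $D^b_{\bold{T}}(\mathfrak{E})$ generated by the images of the $\Phi_a$. It suffices to show that $\mathcal{D}$ contains every element of the spanning class (\ref{eqn:spanning_X}), since then any object in the right orthogonal $\mathcal{D}^{\perp}$ would also be right orthogonal to the spanning class and hence zero. For $\sL\in\mathrm{Pic}_{\bold{T}}(\mathfrak{X})$, fullness of $\{E_a\}$ in $D^b_{\bold{T}}(\mathfrak{X})$ expresses $\sL$ as a finite iterated cone of the $E_a$; since the construction of \cite{bo} is $\bold{T}$-equivariant and built from inclusions of toric substacks together with $\bold{T}$-equivariant line bundles, the same system of distinguished triangles, twisted fiberwise by $P$, expresses $\tilde{\sL}$ as a finite iterated cone of $\tilde{E}_a$'s. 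Tensoring with $\phi^*L^{\otimes i}$ (which, having trivial $\bold{T}$-action, distributes through cones and sits in $\phi^*D^b(B)$) places $\phi^*L^{\otimes i}\otimes\tilde{\sL}$ in $\mathcal{D}$, as required.

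The main obstacle I anticipate is the globalization step in (iii): one must confirm that the Beilinson-type resolution of $\sL$ by the $E_a$'s on a single fiber $\mathfrak{X}$, which in \cite{bo} is produced via $\bold{T}$-equivariant Koszul-type constructions, really does descend through the $P$-twisting procedure to a resolution of $\tilde{\sL}$ by $\tilde{E}_a$'s on $\mathfrak{E}$. A secondary technical point is verifying flat base change in the equivariant stacky setting, which requires bookkeeping with the characters of $\bold{T}$ carried by the $\tilde{E}_a$. Once these globalization/descent statements are in place, the rest of the argument is formal.
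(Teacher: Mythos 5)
Your proof is correct in outline but proves the semi-orthogonality by a genuinely different route than the paper. The paper does not pass through $R\phi_*R\mathcal{H}om$ at all: it reduces to line bundles $M_i$ on $B$, works on a Zariski cover $\{B_i\}$ trivializing $\mathfrak{E}\to B$, resolves the sheaves $\tilde{\mathcal{O}}_{I,\overline{p}}$ by Koszul complexes exactly as in Borisov--Orlov, runs the local-to-global spectral sequence of \cite[Prop.\ 2.4]{bo} on $[(U_\Sigma\times B_i)/(\mathbb{C}^*)^n]$ to get the local vanishing of $\mathcal{E}xt$, and then a Grothendieck spectral sequence to globalize. Your argument --- adjunction plus projection formula to reduce to $R\phi_*R\mathcal{H}om(\tilde{E}_a,\tilde{E}_b)$, then flatness of $\tilde{E}_a$ over $B$ and derived base change to identify its fibers with $\mathrm{RHom}$ on $\mathfrak{X}_{b_0}$ and conclude by Nakayama --- is cleaner and avoids re-running the Koszul computation in the relative setting; it also yields full faithfulness of each $\Phi_a$ (via $R\phi_*R\mathcal{H}om(\tilde{E}_a,\tilde{E}_a)\cong\cO_B$), a point the paper's proof does not explicitly verify even though it is needed for the pieces $\phi^*D^b(B)\otimes\tilde{E}_a$ to be copies of $D^b(B)$. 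What the paper's approach buys is that it never needs base-change or flatness bookkeeping: everything is an explicit cohomology computation on affine charts. For generation, both you and the paper invoke the spanning class of Proposition \ref{spanningclass}; your observation that one must actually exhibit $\tilde{\sL}$ inside the subcategory generated by the $\tilde{E}_a$ (by globalizing the Beilinson-type resolution through the $P$-twist) is a real step that the paper elides, and your sketch of it is the right one.

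One point you must tighten: as written, the identity $R\phi_*R\mathcal{H}om(\tilde{E}_a,\tilde{E}_b)=0$ for $a>b$ is false for the full (non-equivariant) pushforward, since the fiber of that complex is the full $\mathrm{RHom}_{\mathfrak{X}_{b_0}}(E_a,E_b)$, of which only the $\bold{T}$-weight-zero part vanishes; nonzero weight components generally survive. The adjunction formula in the equivariant category produces $\bigl(R\phi_*R\mathcal{H}om(\tilde{E}_a,\tilde{E}_b)\bigr)^{\bold{T}}$, and since $\bold{T}$ is reductive the weight decomposition is a direct sum commuting with base change, so running your argument on the invariant summand gives exactly the needed vanishing. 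This is the ``character bookkeeping'' you flagged yourself; once inserted, the proof is complete.
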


\begin{proof}
Since (\ref{eqn:spanning_E}) is a spanning class of $D^b_{\bold{T}}(\mathfrak{E})$, the subcategory of $D^b_{\bold{T}}(\mathfrak{E})$ generated by (\ref{eqn:SOD_E}) is the whole $D^b_{\bold{T}}(\mathfrak{E})$.

It remains to show semi-orthogonality, namely
\begin{equation}\label{eqn:ortho}
{\mathrm Hom}_{D^b_{\bold{T}}(\mathfrak{E})}(\phi^*M_1\otimes \tilde{E}_a, \phi^*M_2\otimes \tilde{E}_b)=0    
\end{equation}
for $M_1, M_2$ coherent sheaves on $B$ and $a>b$. 

Since $B$ is smooth and projective, every coherent sheaf of $B$ can be resolved by a (finite) complex of direct sums of line bundles. 
It suffices to consider the case where $M_i=L_i, i=1, 2$ are line bundles on $B$. It is also clear that we can assume $M_1$ is the trivial line bundle. Thus we need to compute $${\mathrm Hom}_{D^b_{\bold{T}}(\mathfrak{E})}(\tilde{E}_a, \phi^* L_2 \otimes \tilde{E}_b).$$ We adapt the main idea from \cite{bo}. 
We use $(I, \overline{p})$ (resp. $(J, \overline{q})$) to label the object $\tilde{E}_a$ (resp. $\tilde{E}_b$) as mentioned in the previous section. It is more convenient for our proof. 

As $\mathfrak{E} \to B$ is Zariski locally trivial, we have a Zariski-open cover $\{B_i\}$ of $B$ such that 
$\mathfrak{E}|_{B_i} $ is isomorphic to $[(U_{\Sigma}\times B_i)/G]$; note that $G$ acts trivially on $B_i$. 
Let $j_{\Sigma \times B_i}:  U_{\Sigma_i} \times B_i \to  \mathbb{C}^{n} \times B_i$ be the natural open embedding. 
For each $\tilde{\mathcal{O}}_{I, \overline{p}}$, set $
\tilde{\mathcal{O}}^{U_{\Sigma} \times B_i}_{I,\overline{p}}:=
j^{ *}_{\Sigma \times B_i}(\tilde{\mathcal{O}}_{I, \overline{p}}).$

We first verify that 
\begin{equation}
{\mathrm Hom}_{[U_{\Sigma} \times B_i/G]}(\tilde{\mathcal{O}}^{U_{\Sigma} \times B_i}_{I,\overline{p}}, \phi^* L_2 \otimes \tilde{\mathcal{O}}^{U_{\Sigma} \times B_i}_{J,\overline{q}})^{\bold{T}}=0 
\end{equation} 
when
$(I, \overline{p}) > (J, \overline{q})$. Note again that  $(I, \overline{p}) > (J, \overline{q})$ means that 
$\overline{p} > \overline{q}$  lexicographically or $\overline{p}= \overline{q}$ and $|I| > |J|$.

We resolve the  (shifted) sheaves $\tilde{\mathcal{O}}^{\Sigma \times B_i}_{I,\overline{p}}$ and 
$\tilde{\mathcal{O}}^{\Sigma \times B_i}_{J,\overline{q}}$ by Koszul complexes as in \cite{bo}.  
Note that the argument for the $[(\mathbb{C}^n  \times B_i)/(\mathbb{C}^*)^n]$ case does not directly apply to the general $(\mathbb{C}^*)^n$-invariant $U_{\Sigma} \times B_i $. However, we do still have a local-to-global spectral sequence as we can cover $U_{\Sigma_i} \times B_i$ by $(\mathbb{C}^*)^n$-invariant affine open sets, e.g. $\mathbb{C}^n$ with some unions of  coordinate hyper-planes. 

Consider the $E_2$ terms of the following local-to-global spectral sequence as in \cite[Proposition 2.4]{bo}: 

\begin{equation}
E_2^{r,s}= \bigoplus_{I \setminus (I \cap J) \subset S \subset I, |S|=s} H^{r-s}([(U_{\Sigma} \times B_i)/(\mathbb{C}^*)^n], \phi^* L_2 \otimes \tilde{\mathcal{O}}^{U_{\Sigma} \times B_i}_{I \cup J}(\overline{p}-\overline{q} +\overline{\chi_{S}}) ).
\end{equation}
Following \cite[Lemma 2.3]{bo}, the only possible non-zero terms are when $r=s$. This implies that the spectral sequence degenerates at the $E_2$ page. This implies 
\begin{equation}
\begin{split}
&{\mathrm Ext}^r_{U_{\Sigma} \times B_i/(\mathbb{C}^*)^n} (\mathcal{O}^{U_{\Sigma} \times B_i}_{I, \overline{p}}, \phi^* L_2 \otimes \mathcal{O}^{U_{\Sigma} \times B_i}_{J, \overline{q}})\\
& \\
= &\bigoplus_{I \setminus (I \cap J) \subset S \subset I} H^{r-|S|}([(U_{\Sigma} \times B_i)/(\mathbb{C}^*)^n], \phi^* L_2 \otimes \tilde{\mathcal{O}}^{U_{\Sigma} \times B_i}_{I \cup J}(\overline{p}-\overline{q} +\overline{\chi_{S}}) ).
\end{split}
\end{equation}
Since  we assume $(I,\overline{p}) >(J,\overline{q})$, every term on the right-hand side  is actually zero. 
This shows the left-hand side is zero. Letting $r=0$, we obtain $${\mathrm Hom}_{[U_{\Sigma}/(\mathbb{C}^*)^n\times B_i ]}(\tilde{\mathcal{O}}^{U_{\Sigma} \times B_i}_{I,\overline{p}}, \phi^* L_2 \otimes \tilde{\mathcal{O}}^{U_{\Sigma} \times B_i}_{J,\overline{q}})=0.$$  
The last step is a standard argument. Consider the composition of the global section functor $\Gamma$ and $\mathcal{H}om( \phi^*M_1\otimes \tilde{E}_a, -)$. The Grothendieck spectral sequence for their composition has $E^{p,q}_2 = H ^{p}( \mathfrak{E}, \mathcal{E}xt^q(\phi^*M_1\otimes \tilde{E}_a, \phi^*M_2\otimes \tilde{E}_b))$. We have shown $E^{p,q}_2$=0. This shows (\ref{eqn:ortho}) is valid, as desired. 
\end{proof}

\section{Fourier-Mukai equivalence}\label{sec:FM}
Let $\mathfrak{X}_\pm$ be two smooth toric stacks related by a single crepant wall-crossing, as in \cite{cij}. Another toric stack $\mathfrak{X}$ and two birational morphisms $\pi_\pm: \mathfrak{\tilde{X}}\to \mathfrak{X}_\pm$ are constructed in \cite{cij}. We have $\pi_+^*K_{\mathfrak{X}_+}=\pi_-^*K_{\mathfrak{X}_-}$. Furthermore, the Fourier-Mukai transform $(\pi_+)_*(\pi_-)^*: D^b_T(\mathfrak{X}_-)\to D^b_+(\mathfrak{X}_+)$ is shown to be an equivalence, see \cite{cijs}.

Applying the construction of toric stack bundles to $\mathfrak{\tilde{X}}$ and $\mathfrak{X}_\pm$, we obtain toric stack bundles 
\begin{equation}
\mathfrak{\tilde{E}}\to B, \quad \mathfrak{E}_\pm\to B.    
\end{equation}
The morphisms $\pi_\pm: \mathfrak{\tilde{X}}\to \mathfrak{X}_\pm$ extends to morphisms $\tilde{\pi}_\pm: \mathfrak{\tilde{E}}\to \mathfrak{E}_\pm$. The maps fit into the following diagram
\begin{equation*}
\xymatrix{
 & \mathfrak{\tilde{X}}\subset\mathfrak{\tilde{E}} \ar[dl]_{\tilde{\pi}_-}\ar[dr]^{\tilde{\pi}_+} & \\
 \mathfrak{X}_-\subset \mathfrak{E}_-\ar[dr]_{\phi_-} & & \mathfrak{E}_+\supset \mathfrak{X}_+\ar[dl]^{\phi_+}\\
 & B. &
}
\end{equation*}

\begin{prop}\label{prop:FM}
The Fourier-Mukai transform $\mathbf{FM}=(\tilde{\pi}_+)_*(\tilde{\pi}_-)^*: D^b_{\bold{T}}(\mathfrak{E}_-)\to D^b_{\bold{T}}(\mathfrak{E}_+)$ is an equivalence.    
\end{prop}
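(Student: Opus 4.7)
The plan is to reduce the global equivalence to the fiberwise equivalence established in \cite{cijs}, exploiting the Zariski-local triviality of the toric stack bundles over $B$ together with the projection formula and base change. First I will establish $D^b(B)$-linearity of $\mathbf{FM}$. Since both $\phi_-\circ\tilde{\pi}_-$ and $\phi_+\circ\tilde{\pi}_+$ agree with the structure morphism $\tilde{\phi}:\mathfrak{\tilde{E}}\to B$, the projection formula yields
$$\mathbf{FM}(\phi_-^*M\otimes F)\simeq \phi_+^*M\otimes \mathbf{FM}(F)$$
for every $M\in D^b(B)$ and $F\in D^b_{\bold{T}}(\mathfrak{E}_-)$, and the same calculation shows that the right adjoint $\mathbf{FM}^R=(\tilde{\pi}_-)_*(\tilde{\pi}_+)^!$ is $B$-linear. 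Pulling back the $K$-equivalence $\pi_+^*K_{\mathfrak{X}_+}=\pi_-^*K_{\mathfrak{X}_-}$ along the principal bundle $P$ gives $\omega_{\tilde{\pi}_+}\simeq \omega_{\tilde{\pi}_-}$, so the adjoints behave symmetrically on the two sides of the wall.

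Next, because $\phi_\pm$ and $\tilde{\phi}$ are flat, for any $b_0\in B$ the fiber squares $\mathfrak{\tilde{X}}_{b_0}\hookrightarrow \mathfrak{\tilde{E}}$ and $\mathfrak{X}_{\pm,b_0}\hookrightarrow \mathfrak{E}_\pm$ are Tor-independent, so proper/flat base change gives
$$Li_+^*\circ \mathbf{FM}\simeq \mathbf{FM}_{b_0}\circ Li_-^*,$$
where $i_\pm:\mathfrak{X}_{\pm,b_0}\hookrightarrow \mathfrak{E}_\pm$ and $\mathbf{FM}_{b_0}=(\pi_+)_*\pi_-^*$ is the fiberwise Fourier--Mukai transform; the analogous identity holds for $\mathbf{FM}^R$ and $\mathbf{FM}_{b_0}^R$. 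By \cite{cijs} the functor $\mathbf{FM}_{b_0}$ is an equivalence with adjoint $\mathbf{FM}_{b_0}^R$.

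To deduce the equivalence globally I will show that the unit $\eta:\mathrm{id}\to \mathbf{FM}^R\mathbf{FM}$ and counit $\varepsilon:\mathbf{FM}\,\mathbf{FM}^R\to \mathrm{id}$ are natural isomorphisms. For $F\in D^b_{\bold{T}}(\mathfrak{E}_-)$ the cone $C(F):=\mathrm{Cone}(\eta_F)$ is a bounded $\bold{T}$-equivariant complex of coherent sheaves on $\mathfrak{E}_-$. Combining the preceding base-change identities for $\mathbf{FM}$ and $\mathbf{FM}^R$ with the fact that $\mathbf{FM}_{b_0}$ is an equivalence yields $Li_-^*C(F)\simeq 0$ for every $b_0\in B$. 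An induction on cohomological amplitude, applying Nakayama's lemma to the top nonzero cohomology sheaf of $C(F)$ and using that the fibers $\mathfrak{X}_{-,b_0}$ set-theoretically cover $\mathfrak{E}_-$, then forces $C(F)=0$. Hence $\eta$ is an isomorphism and $\mathbf{FM}$ is fully faithful; the parallel argument applied to $\varepsilon$ gives essential surjectivity.

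The main technical obstacle will be verifying the equivariant/stacky derived base change and the compatibility of $\mathbf{FM}^R$ with fiber restriction in a fully rigorous way; once these pieces of Grothendieck--duality-style bookkeeping are set up, the input from \cite{cijs} at a single fiber propagates essentially formally to the global equivalence thanks to $B$-linearity.
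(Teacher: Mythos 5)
Your proposal is correct, but it reaches the conclusion by a genuinely different route than the paper. Both arguments begin the same way: $B$-linearity of $\mathbf{FM}$ via the projection formula, passage to the right adjoint $\mathbf{G}=(\tilde{\pi}_-)_*(\tilde{\pi}_+)^!$, and the reduction of the equivalence to showing that the cones of the unit and counit vanish, with the fiberwise equivalence of \cite{cijs} as the essential input. The difference is in how the vanishing of the cone is detected. The paper restricts to a Zariski-open cover $\{U_i\}$ of $B$ trivializing the bundles, invokes the equivalence of $\mathbf{FM}_i$ on each $\phi_-^{-1}(U_i)\simeq U_i\times\mathfrak{X}_-$, and then tests the cone against the spanning class of Proposition \ref{spanningclass} by comparing the long exact $\mathrm{Hom}$-sequences over $\mathfrak{E}_\pm$ with those over the trivializing opens. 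You instead restrict derived-categorically to the closed fibers $\mathfrak{X}_{\pm,b_0}$, use Tor-independent base change to identify $Li_\pm^*$ applied to the unit/counit with the fiberwise unit/counit, and kill the cone by Nakayama applied to its top nonvanishing cohomology sheaf. Your version buys two things: it applies \cite{cijs} literally to the fibers $\mathfrak{X}_\pm$ rather than implicitly to the products $U_i\times\mathfrak{X}_\pm$ over a non-proper base, and it does not need the spanning class of Section \ref{sec:spanning} at all. The cost is exactly the bookkeeping you flag: one must verify that the adjunction morphisms for $(\mathbf{FM},\mathbf{G})$ are compatible under $Li_\pm^*$ with those for $(\mathbf{FM}_{b_0},\mathbf{FM}_{b_0}^R)$, which requires the base-change isomorphisms for both $(\tilde{\pi}_+)_*$ and $(\tilde{\pi}_+)^!$ along the regular immersions $i_\pm$ in the $\bold{T}$-equivariant stacky setting; this is standard but should be spelled out, since the whole argument hinges on it. The paper's cover-based argument trades that duality bookkeeping for the (also nontrivial, and only sketched) claims that the spanning class detects vanishing and that the vertical restriction maps in its diagram of long exact sequences are isomorphisms.
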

\begin{proof}
We calculate 
\begin{equation}
\mathbf{FM}(\phi_-^*M\otimes \tilde{\sL})=\phi_+^*M\otimes\mathbf{FM}(\tilde{\sL}).    
\end{equation}
Let $\{U_i\}$ be an open cover of $B$ such that $\phi_\pm^{-1}(U_i)\simeq U_i\times \mathfrak{X}_\pm$. By the results of \cite{cijs}, the restrictions $\mathbf{FM}|_{\phi_-^{-1}(U_i)}$ are equivalences. 
To get the global case that $\mathbf{FM}$ is an equivalence, we adapt the argument 
in \cite[Proposition 3.2]{c} to the $\bold{T}$-equivariant setting. 
Denote the right adjoint of $\mathbf{FM}$ by $\mathbf{G}$.  
It sufficies to show that $a \cong \mathbf{G} \circ \mathbf{FM} (a)$ for
all $a \in D^b_{\bold{T}} (\mathfrak{E}_- )$ and $\mathbf{FM} \circ \mathbf{G}(b) \cong  b$ for all $b \in D^b_{\bold{T}}(\mathfrak{E}_+)$. 
Since we are in the smooth case, it is not necessary to work with the category $D_{qc}$ as in \cite{c}; it is also clear that \cite[Claim 3.8]{c} is automatic in our case. 

We first show $a \cong \mathbf{G} \circ \mathbf{FM} (a)$ for
all $a \in D^b_{\bold{T}} (\mathfrak{E}_- )$. 
 For each $a \in D^b_{\bold{T}}(\mathfrak{E}_-)$,  we have a distinguished triangle 
 $$\longrightarrow  a \longrightarrow \mathbf{G} \circ \mathbf{FM} (a) \longrightarrow c \longrightarrow a[1] \longrightarrow. $$
In order to show $a \cong \mathbf{G} \circ \mathbf{FM} (a)$, it suffices to show that $c \cong 0$. 

Consider the collection of spanning class in Proposition \ref{spanningclass}. Let $y$ be any element in this class. Although $y$ is not supported at a single point as in \cite{c}, its support is still in one single fiber.    Denote the restriction of $y$ to  $\phi_\pm^{-1}(U_i)$ by $y_i$.

Taking ${\mathrm Hom}(y, -)$ and ${\mathrm Hom}(y_i, -)$ into the distinguished triangles, we have the following exact sequences as in proof of \cite[Proposition 3.2]{c} 
\begin{equation*}
\begin{CD}
{\mathrm Hom}^j(y,a) @>>> {\mathrm Hom}^j(y, \mathbf{G} \circ \mathbf{FM}(a)) @>>> {\mathrm Hom}^j(y,c) @>>>  {\mathrm Hom}^{j+1}(y,a) @>>>\\
@VVV @VVV @VVV @VVV  \\
{\mathrm Hom}^j(y_i,a_i) @>>> {\mathrm Hom}^j(y_i, \mathbf{G}_i \circ \mathbf{FM}_i(a)) @>>> {\mathrm Hom}^j(y_i,c_i) @>>>  {\mathrm Hom}^{j+1}(y_i,a_i) @>>>.
\end{CD}
\end{equation*}

Note that the vertical arrows are isomorphisms, ${\mathrm Hom}_{\bold{T}}(y, \mathbf{G} \circ \mathbf{FM} (a)) \cong {\mathrm Hom}_{\bold{T}}(\mathbf{FM}(y),  \mathbf{FM} (a))$ and ${\mathrm Hom}_{\bold{T}}(y_i, \mathbf{G}_i \circ \mathbf{FM}_i (a_i)) \cong {\mathrm Hom}_{\bold{T}}(\mathbf{FM}_i(y_i),  \mathbf{FM}_i (a_i))$. Since  $\mathbf{FM}_i$ are equivalences for all $i$, it follows that ${\mathrm Hom}^j_{\bold{T}}(y,c)=0$ for all $y$ from the spanning class. This shows $c \cong 0$. 

We now show 
$\mathbf{FM} \circ \mathbf{G} (b) \cong b$ for all $b \in D_{\bold{T}}^b(\mathfrak{E}_{+})$. 
Note that $\mathbf{FM}_i$ is an equivalence. It follows that 
$(\mathbf{FM}_i, G_i)$ is an adjoint pair and $\mathbf{G}_i$ is an equivalence. Using the following distinguished triangle 
$$\longrightarrow  \mathbf{FM} \circ \mathbf{G} (b) \longrightarrow b \longrightarrow c \longrightarrow  \mathbf{FM} \circ \mathbf{G} (b)[1] \longrightarrow,$$
we can show  $\mathbf{FM} \circ \mathbf{G} (b) \cong b$ as in the last part of the proof of \cite[Proposition 3.2]{c}.
\end{proof}


\begin{thebibliography}{12}

\bibitem{bcs} L. Borisov, L. Chen, G. Smith, {\em The orbifold Chow ring of toric Deligne-Mumford stacks}, J. Am. Math. Soc. 18, No. 1, 193--215 (2005).

\bibitem{bh} L. Borisov, R. Horja, {\em On the K-theory of smooth toric DM stacks}, in: ``Snowbird lectures on string geometry'', Contemporary Mathematics 401, 21--42 (2006), American Mathematical Society.

\bibitem{bo} L. Borisov, D. Orlov, {\em Equivariant exceptional collections on smooth toric stacks}, Izv. Math. 83, No. 4, 698--730 (2019).

\bibitem{c} J.-C. Chen, {\em Flops and equivalences of derived categories for threefolds with only terminal Gorenstein singularities}, J. Differ. Geom. 61, No. 2, 227--261 (2002).

\bibitem{cij} T. Coates, H. Iritani, Y. Jiang, {\em The crepant transformation conjecture for toric complete intersections}, Adv. Math. 329, 1002--1087 (2018).

\bibitem{cijs} T. Coates, H. Iritani, Y. Jiang, E. Segal, {\em K-theoretic and categorical properties of toric Deligne-Mumford stacks}, Pure Appl. Math. Q. 11, No. 2, 239--266 (2015).

\bibitem{j} Y. Jiang, {\em The orbifold cohomology ring of simplicial toric stack bundles}, Ill. J. Math. 52, No. 2, 493--514 (2008).

\bibitem{jt} Y. Jiang, H.-H. Tseng, {\em On the K-theory of toric stack bundles}, Asian J. Math. 14, No. 1, 1--10 (2010).

\bibitem{jty} Y. Jiang, H.-H. Tseng, F. You, {\em The quantum orbifold cohomology of toric stack bundles}, Lett. Math. Phys. 107, No. 3, 439--465 (2017).

\bibitem{tt} X. Tang, H.-H. Tseng, {\em Duality theorems for \'etale gerbes on orbifolds}, Adv. Math. 250, 496--569 (2014).

\end{thebibliography}
\end{document}